\DeclareMathAlphabet{\pazocal}{OMS}{zplm}{m}{n}
\newtheorem{lemma}{Lemma}[section]
\newtheorem{theorem}[lemma]{Theorem}
\newtheorem{proposition}[lemma]{Proposition}
\newtheorem{remark}[lemma]{Remark}
\newcolumntype{L}[1]{>{\raggedright\let\newline\\\arraybackslash\hspace{0pt}}m{#1}}
\newcolumntype{C}[1]{>{\centering\let\newline\\\arraybackslash\hspace{0pt}}m{#1}}
\newcolumntype{R}[1]{>{\raggedleft\let\newline\\\arraybackslash\hspace{0pt}}m{#1}}
\def\l{\lambda}
\def\o{\omega}
\def\s{\sigma}
\def\remove#1{}
\newcommand{\p}{{\mathbb{P}}}
\newcommand{\C}{{\mathbb{C}}}
\newcommand{\OO}{{\mathcal{O}}}
\newcommand{\Det}{{\rm{Det}}}
\newcommand{\N}{{\mathbb{N}}}
\newcommand{\K}{{\mathbb{K}}}
\newcommand{\Sc}{{\mathbb{S}}}
\title{On the vanishing of the hyperdeterminant under certain symmetry conditions}
\author[E. Arrondo]{Enrique Arrondo  $^{1}$ }
\author[A. Tocino]{Alicia Tocino $^{2,\ast}$}
\address{$^{\ast}$ Corresponding author. Email: alicia.tocino@uma.es}
\address{$^{1}$ Instituto de Matem\'atica Interdisciplinar and Departamento de Álgebra, Geometría y Topología, Facultad de Ciencias Matemáticas, Universidad Complutense de Madrid, 28040 Madrid, Spain}
\email{arrondo@mat.ucm.es}
\address{$^{2}$ Departamento de Matem\'atica Aplicada, E.T.S. Ingenier\'\i a Inform\'atica, Universidad de M\'alaga, Campus de Teatinos s/n. 29071 M\'alaga,   Spain. }
\email{alicia.tocino@uma.es}
\subjclass[2020] {15A69, 15A72, 20G05} 
\keywords{hyperdeterminant, Schur functors, representation theory}
\thanks{The first author is supported by the Spanish Ministerio de Ciencia e Innovaci\'on through the project PID2021-124440NB. The second author is supported by the Junta de Andaluc\'{\i}a  through the project FQM-336 with FEDER funds and by Universidad de Málaga.
}
\begin{document}

\begin{abstract}
Given a vector space $V$ over a field $\K$ whose characteristic is coprime with $d!$, let us decompose the vector space of multilinear forms $V^*\otimes\overset{\text(d)}{\ldots}\otimes V^*=\bigoplus _\l W_\l(X,\K)$ according to the different partitions $\l$ of $d$, i.e. the different representations of $S_d$. In this paper we first give a decomposition $W_{(d-1,1)}(V,\K)=\bigoplus_{i=1}^{d-1}W_{(d-1,1)}^i(V,\K)$. We finally prove the vanishing of the hyperdeterminant of any $F\in(\bigoplus_{\l\ne(d),(d-1,1)})\oplus W_{(d-1,1)}^i(V,\K)$. This improves the result in \cite{TocinoHyper} and \cite{repre}, where the same result was proved without this new last summand.

\end{abstract}

\maketitle

\section{Introduction}

In \cite{TocinoHyper}, the second author proved (in an implicit way, made explicit  by the first author in \cite{repre}) that the vanishing of the hyperdeterminant of a hypermatrix of size $n\times \overset{\text(d)}{\ldots}\times n$ belonging to (the direct sum of) all but two of its possible symmetries. This implies that the hypersurface defined by the hyperdeterminant contains a linear subspace of very large dimension.

In this paper, we improve that result including a subspace of one of the remaining two symmetries. The motivation is that, for matrices of size $n\times n$, in which the only two types of symmetry are symmetric matrices and skew symmetric matrices, we still have the vanishing of the determinant of skew-symmetric matrices when $n$ is odd. Hence one still could expect that hypermatrices in one of the two remaining symmetries could still have hyperdeterminant equal to zero. Namely, we could expect that, at least under certain condition on $n$, those hypermatrices that are multiplied by primitive $d$-root of unity when moved by a $d$-cycle (in a sense that we will explain) should still have hyperdeterminant equal to zero. This will be the case, although the proof will not be as simple as in the case $d=2$, but a posteriori this will show that, for $d\ge2$ we will not need any condition on $n$.

The structure of the paper is as follows. In a section of preliminaries we will recall the precise notion of symmetries of functions in $d$ variables using representation theory of the symmetric group $S_d$ and we will also recall the main properties of the hyperdeterminant. In the last section, we will prove the main results of the paper. It is striking to observe that, for the proof of our main result we will need to use again algebraic geometry, by interpreting our problem as the existence of a section of a twist of the cotangent bundle of the projective space with no zeroes.

\vspace{0.25cm}

\noindent
\textbf{Acknowledgments.} We thank Giorgio Ottaviani for his useful comments that improved the presentation of the article and led to Remark \ref{remark:subspaces}, among other enhancements.

\section{Preliminaries}

\bigskip
\textbf{Symmetries of functions.} 
Assume that $X$ is a set and $\K$ is a field. It is a standard fact that any $F\colon X\times X\rightarrow \K$ can be decomposed into a symmetric and a skew-symmetric part as
$$F(x,y)=\frac{F(x,y)+F(y,x)}{2}+\frac{F(x,y)-F(y,x)}{2}$$ 
(observe that we need the characteristic of $\K$ to be different from two).
In general, in order to have a decomposition of a function in $d$ variables as a sum of functions with different types of symmetry, one must use representation theory of the symmetric group $S_d$. Our standard reference for representation theory on $S_d$ will be \cite{fulton2013representation}, where the author assumes $\K=\C$. For an arbitrary field, one could follow \cite{Isaacs} 
(for generalities of representation theory) or \cite{repre} (for the concrete case we are dealing with, from where we conclude that the theory will work when the characteristic of $\K$ does not divide $\vert S_d\vert=d!$, which we will assume throughout the paper. 

To fix our set-up, we first observe that the symmetric group $S_d$ acts naturally on the vector space $\K^{X\times\ldots\times X}$ of functions of $d$-variables $X\times \overset{\text(d)}{\ldots}\times X\rightarrow \K$ as follows, $(\s F)(x_1,\ldots,x_d):=F(x_{\s(1)},\ldots,x_{\s(d)})$ for any $\s\in S_d$ and $F\in \K^{X\times \ldots\times X}$. 
We could restrict our attention to the action of $S_d$ on invariant vector subspaces
$C^d(X,\K)\subset \K^{X\times\ldots\times X}$. 
In fact, we are interested in the case where $X$ is a vector space $V$ and $C^d(V,\K)=V^*\otimes\overset{\text(d)}{\ldots}\otimes V^*=\{F: V\times \overset{\text(d)}{\ldots}\times V\to \K \text{ multilinear form}\}$.

We can naturally extend the action of $S_d$ to an action of the group algebra $\K[S_d]$ on $C^d(X,\K)$:  $$(\sum_{\s\in S_d}\l_\s \s F)(x_1,\ldots,x_d):=\sum_{\s\in S_d}\l_\s F(x_{\s(1)},\ldots,x_{\s(d)})$$ for any $F\in C^d(X,\K)$. 
An example of this action is given by the aforementioned decomposition for $d=2$:
$$F=\frac{(1)+(12)}{2}F+\frac{(1)-(12)}{2}F$$
where $(1)$ denotes the identity permutation. 
For a general $d$, there are as many types of symmetries as irreducible representations of $S_d$. 
Recall that every irreducible representation of $S_d$ is obtained from a partition $\l$ of $d$ and it is called $V_\l$. 

In general, following the notation of \cite[Theorem 4.5]{repre}, one has the decomposition of $C^d(X,\K)$ into different types of symmetry as follows:
$$C^d(X,\K)=\bigoplus _\l W_\l(X,\K) $$
in which each $W_\l(X,\K)$ is the sum (possibly infinite) of $V_\l$. In the specific case we are interested in, we have the following decomposition 
$$C^d(V,\K)=V^*\otimes\overset{\text(d)}{\ldots}\otimes V^*\cong \bigoplus_\l W_\l(V,\K)$$
where $W_\l(V,\K):=(\Sc_\l V)^{\dim(V_\l)}$  and $\Sc_\l V$ is the so-called Schur functor of $V$ (see \cite[Lecture 6]{fulton2013representation}, which is also a direct sum of the irreducible representation $V_\l$.

\begin{remark}\rm
Recall that if $\l=(d)$ then $W_{(d)}(X,\K)$ is the set of symmetric functions and if $\l=(1,\ldots,1)$ then $W_{(1,\ldots,1)}(X,\K)$ is the set of skew-symmetric functions. The case we are going to study most closely is when $\l=(d-1,1)$ for which $V_{(d-1,1)}$ is the so-called standard representation.
It is not straightforward to give the equations of each type of symmetry (see, for example \cite{MetropolisRota3}, \cite{RotaMetropolisStein} and \cite[Section 4]{repre}). For instance, being in the complementary of the symmetric case, that is $F\in \oplus_{\l\neq (d)}W_\l(V,\K)$, is given by a concrete equation, namely $F(v,\ldots,v)=0$ for any $v\in V$ (see \cite[Lemma 3.2]{TocinoHyper} or \cite[Lemma 4.7]{repre} for further details), which will be used in the proof of Theorem \ref{teo:main}.
\end{remark}

\noindent
Observe that the vector space of multilinear forms $F:V\times \overset{\text(d)}{\ldots}\times V\to \K$, where $V$ is a vector space with basis $\{v_1,\ldots,v_n\}$,  is isomorphic to the vector space of hypermatrices  $A=(a_{i_1\ldots i_d})$ of size $n\times \overset{\text(d)}{\ldots}\times n$ by taking $F(v_{i_1},\ldots,v_{i_d})=:a_{i_1\ldots i_d}$. So, one can talk about either symmetries of multilinear forms or symmetries of hypermatrices.

\bigskip

\textbf{Features of hyperdeterminants.}
There exists the notion of hyperdeterminant of a general hypermatrix (see \cite[Chapter 14]{Kapranov}), which is equivalent to the notion of hyperdeterminant of a multilinear form $F:V_1\times \ldots \times V_d\to \K$ for different vector spaces $V_1,\ldots,V_d$. 
Nevertheless, since we are dealing with symmetries, we focus in the case $V_1=\ldots=V_d=V$. In that event, we recall that the hyperdeterminant of $F$ is a form $\Det:V^*\otimes\overset{\text(d)}{\ldots}\otimes V^* \to\K$ (whose degree is not specially easy to compute) with the important property that $\Det(F)=0$ if and only if there exist $v_1,\ldots,v_d\in V$ such that $F(v_1,\ldots,v_{d-1},V)=F(v_1,\ldots,V,v_d)=\ldots=F(V,v_2,\ldots,v_d)=0$. 
In particular, $\Det(F)=0$ if there exists $v\in V$ such that: 
\begin{equation}\tag{$\star$} \label{estrella}
    F(v,\ldots, v, V)=\ldots=F(v, \ldots, v, V, v)=F(V,v,\ldots,v)=0.
\end{equation}

\begin{remark}\rm\label{remark:any}
    In \cite[Main Theorem]{TocinoHyper} it was proven that if $F\in W_\l(V,\K)$, for all $\l\neq (d),(d-1,1)$, then condition \eqref{estrella} is satisfied for all $v\in V$. Hence, condition \eqref{estrella} is also satisfied for $(v,F)$  when $F\in \bigoplus_{\l\neq (d), (d-1,1)} W_\l(V,\K)$ and all $v\in V$, implying that $\Det(F)=0$ (see \cite[Proposition 4.8]{repre} for an alternative proof). Observe that this result does not give any information when $d=2$ since we only have $W_{(2)}(V,\K)$ and $W_{(1,1)}(V,\K)$. Nevertheless, it is known that the determinant of antisymmetric matrices (which correspond to $F\in W_{(1,1)}(V,\K)$) of odd order is always zero. This fact leads us to expect that the hyperdeterminant will cancel out in more pieces than the ones that has already been shown. We will see in Theorem \ref{teo:main} that these new pieces will live inside $W_{(d-1,1)}(V,\K)$.
\end{remark}

\section{Main results}
In order to generalize the notion of skew-symmetric matrices glimpsed in the previous remark, we need $\K$ to have a primitive $d^{\text{th}}$-root of unity, and we call it $\o$. So, from now on, we assume that $\K$ contains a primitive $d^{\text{th}}$-root of unity.
We start with some remarks and lemmas that will be used to prove the decomposition of $W_{(d-1,1)}(V,\K)$ given in Proposition \ref{prop:descomp} and Theorem \ref{teo:main}.

\begin{remark}\label{remark:desco}\rm
    Consider $n\in \N$ and $\omega$ a  primitive  $\rm{n^{th}}$-root of unity. This implies that $\omega^n=1$ and $1+\omega+\omega^2+\ldots+\omega^{n-1}=0$.
    Observe that, for each $i\in \N, 1<i< d$,  one has the following equality:
    $$0=\omega^n-1=(\omega^{n_i}-1)((\omega^{n_i})^{r_i-1}+(\omega^{n_i})^{r_i-2}+\ldots+(\omega^{n_i})+1)$$
    \noindent
    where $n_i$ is the greatest common divisor of $n$ and $i$ and $r_i$ satisfies that $n=n_i r_i$.
    Since $\o$ is primitive, $\omega^{n_i}-1\neq 0$ so $(\omega^{n_i})^{r_i-1}+(\omega^{n_i})^{r_i-2}+\ldots+\omega^{n_i}+1=0$.
\end{remark}

\begin{lemma}\label{lemma:omega}
    Let $\o$ be a primitive  $\rm{d^{th}}$-root of unity. Then $\sum_{k=1}^{d-1}(\o^k)^i=-1$ for all $i\in\{1,\ldots,d-1\}$.
\end{lemma}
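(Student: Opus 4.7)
The plan is to reduce the identity to the standard vanishing of the sum of all $d^{\text{th}}$-roots of unity. First I would rewrite
$$\sum_{k=1}^{d-1}(\omega^k)^i \;=\; \sum_{k=1}^{d-1}(\omega^i)^k \;=\; \Bigl(\sum_{k=0}^{d-1}(\omega^i)^k\Bigr) - 1,$$
so the claim becomes equivalent to $\sum_{k=0}^{d-1}(\omega^i)^k = 0$ for every $i\in\{1,\ldots,d-1\}$. Since $\omega$ is a primitive $d^{\text{th}}$-root of unity and $1\le i\le d-1$, the element $\omega^i$ satisfies $(\omega^i)^d=1$ but $\omega^i\ne 1$, so the geometric series formula gives $\sum_{k=0}^{d-1}(\omega^i)^k = \tfrac{(\omega^i)^d-1}{\omega^i-1} = 0$, which finishes the argument.

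Equivalently, one can appeal directly to Remark \ref{remark:desco} (applied with $n=d$): setting $n_i=\gcd(d,i)$ and $r_i=d/n_i$, the remark yields $1+\omega^{n_i}+\cdots+(\omega^{n_i})^{r_i-1}=0$, and since the powers $1,\omega^i,\omega^{2i},\ldots,\omega^{(d-1)i}$ cycle through the $r_i^{\text{th}}$-roots of unity (i.e.\ the powers of $\omega^{n_i}$) exactly $n_i$ times, summing produces $\sum_{k=0}^{d-1}\omega^{ki}=n_i\cdot 0=0$, giving the same conclusion. No real obstacle is anticipated here; the statement is a purely arithmetic identity about roots of unity that the authors are recording as a convenient reference for the subsequent construction of the decomposition of $W_{(d-1,1)}(V,\K)$.
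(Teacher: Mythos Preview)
Your proposal is correct and follows essentially the same route as the paper: the paper's proof also factors $(\omega^i)^d-1=(\omega^i-1)\bigl(\sum_{k=1}^{d-1}(\omega^k)^i+1\bigr)$ and uses $\omega^i\ne 1$ to conclude, invoking Remark~\ref{remark:desco} for this factorization. Your geometric-series phrasing is just a cleaner way of saying the same thing.
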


\begin{proof}
    Applying Remark \ref{remark:desco} and replacing $n$ with $i d$, $n_i$ with $i$ and $r_i$ with $d$ we obtain what we want since
    $$0=(\o^i)^d-1=(\o^i-1)(\sum_{k=1}^{d-1}(\o^i)^k+1)=(\o^i-1)(\sum_{k=1}^{d-1}(\o^k)^i+1)\Longrightarrow \sum_{k=1}^{d-1}(\o^k)^i+1=0.$$
\end{proof}

\begin{lemma}\label{lemma:suma}
    If $F\in W_{(d-1,1)}(V,\K)$, $\s$ is a $d$-cycle of $S_d$ and consider $\gamma=(1)+\s +\s^2+\ldots+\s^{d-1}$ then  $\gamma F=0$.
\end{lemma}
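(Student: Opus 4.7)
The plan is to reduce the statement to a single fact about the irreducible representation $V_{(d-1,1)}$. Since $W_{(d-1,1)}(V,\K)$ is, as an $S_d$-module, a direct sum of copies of $V_{(d-1,1)}$, it suffices to show that $\gamma$ acts as the zero endomorphism of $V_{(d-1,1)}$.

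Because $\sigma^d=(1)$ and $\ch(\K)\nmid d$, the polynomial $t^d-1$ has $d$ distinct roots in $\K$, so $\sigma$ acts diagonalizably on any $\K[S_d]$-module with eigenvalues among $\{1,\omega,\ldots,\omega^{d-1}\}$. On the $\omega^k$-eigenspace of $\sigma$ the operator $\gamma=\sum_{j=0}^{d-1}\sigma^j$ acts as the scalar $\sum_{j=0}^{d-1}(\omega^k)^j$, which equals $d$ when $k\equiv 0\pmod{d}$ and equals $0$ otherwise (this is the geometric series identity used in Remark \ref{remark:desco}, since $(\omega^k)^d=1$ and $\omega^k\neq 1$ for $k\in\{1,\ldots,d-1\}$). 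Hence $\gamma$ acts on any $\K[\langle\sigma\rangle]$-module as $d$ times the projection onto the $1$-eigenspace of $\sigma$, and therefore $\gamma$ vanishes on $V_{(d-1,1)}$ if and only if the fixed subspace $V_{(d-1,1)}^{\langle\sigma\rangle}$ is trivial.

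To verify this, I would use the standard realization of $V_{(d-1,1)}$ as the hyperplane $\{(a_1,\ldots,a_d)\in\K^d : a_1+\cdots+a_d=0\}$ in the natural permutation module, with $S_d$ acting by coordinate permutation. Since $\sigma$ is a $d$-cycle it cyclically permutes the standard basis, so the $\sigma$-invariants in $\K^d$ form the line $\K(1,1,\ldots,1)$, which meets $V_{(d-1,1)}$ trivially because the coordinate sum of $(1,\ldots,1)$ is $d\neq 0$ in $\K$. Therefore $V_{(d-1,1)}^{\langle\sigma\rangle}=0$, and consequently $\gamma$ vanishes on $V_{(d-1,1)}$ and on every copy of it inside $W_{(d-1,1)}(V,\K)$, giving $\gamma F=0$.

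I do not foresee a substantial obstacle; the only mildly subtle point is that the argument must hold for an arbitrary $d$-cycle $\sigma$, but this is automatic since any two $d$-cycles are conjugate in $S_d$ and both the diagonalization and the fixed-point computation are invariant under conjugation.
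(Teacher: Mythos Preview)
Your argument is correct and shares the paper's overall strategy: reduce to the irreducible $V_{(d-1,1)}$ and use its standard realization as the zero-sum hyperplane in $\K^d$. The difference lies only in the final step. The paper simply computes $\gamma(x_1,\ldots,x_d)$ coordinatewise: since $\sigma$ is a $d$-cycle, each coordinate of the sum $\sum_{j=0}^{d-1}\sigma^j(x_1,\ldots,x_d)$ equals $x_1+\cdots+x_d=0$, and the lemma follows immediately. You instead diagonalize $\sigma$ and identify $\gamma$ with $d$ times the projector onto the $\sigma$-fixed subspace, then check that this fixed subspace is trivial. Both are valid; the paper's direct computation is a bit shorter and, unlike your version, does not invoke the hypothesis that $\K$ contains a primitive $d$-th root of unity, so it shows the lemma holds under the weaker standing assumption $\ch(\K)\nmid d!$ alone.
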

\begin{proof}
Since $W_{(d-1,1)}(V,\K)=\oplus V_{(d-1,1)}$, it is enough to prove that $\gamma$ acts as zero on each $V_{(d-1,1)}$. Recall that $V_{(d-1,1)}$ can be identified with the hyperplane $\{(x_1,\ldots,x_d)\in \K^d\, \colon\, x_1+\ldots+x_d=0\}$ and the action of  $\s\in S_d$ on each $(x_1,\ldots,x_d)\in V_{(d-1,1)}$ works as $\s (x_1,\ldots,x_d)=(x_{\s(1)},\ldots,x_{\s(d)})$. So, $\gamma(x_1,\ldots,x_d)=(x_1,x_2,\ldots,x_d)+(x_{\s(1)},x_{\s(2)},\ldots,x_{\s(d)})+\ldots+(x_{\s^{d-1}(1)},x_{\s^{d-1}(2)},\ldots,x_{\s^{d-1}(d)})=(0,\ldots,0)$ as we wanted. 
\end{proof}

\begin{remark}\rm
    In \cite{RotaMetropolisStein} are (originally) given the equations that describe each $F\in W_\l (V,\K)$. The previous lemma adds one additional equation that satisfies $F\in W_{(d-1,1)}(V,\K)$. When $d=3$, this new equation is the only equation that describes $W_{(2,1)}(V,\K)$, that is, $F\in W_{(2,1)}(V,\K)$ if and only if $F+(123 )F+(132)F=0$.
\end{remark}

\begin{proposition}\label{prop:descomp}
    There is a decomposition $W_{(d-1,1)}(V,\K)=\bigoplus_{i=1}^{d-1}W_{(d-1,1)}^i(V,\K)$, where $W_{(d-1,1)}^k(V,\K)=\{F\in W_{(d-1,1)}(V,\K)\,\vert\, \s F=\o^k F\}$ with $k\in\{1,\ldots,d-1\}$, $\o$ is a primitive $\rm{d^{th}}$-root of unity, $\s$ is a $d$-cycle and $d>2$.
\end{proposition}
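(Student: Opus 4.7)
The plan is to view $W_{(d-1,1)}(V,\K)$ as a module over the cyclic subgroup $\langle \s \rangle \subset S_d$ of order $d$, and to diagonalize the action of $\s$. Since $\s^d = (1)$, the linear operator induced by $\s$ on $W_{(d-1,1)}(V,\K)$ satisfies $\s^d - \id = 0$. Because $\K$ contains a primitive $d^{\text{th}}$-root of unity $\o$ and $\mathrm{char}\,\K$ does not divide $d!$ (in particular not $d$), the polynomial
$$x^d - 1 = \prod_{k=0}^{d-1}(x - \o^k)$$
factors over $\K$ with pairwise distinct roots. Hence the minimal polynomial of $\s$ is separable and $\s$ is diagonalizable, yielding the eigenspace decomposition
$$W_{(d-1,1)}(V,\K) = \bigoplus_{k=0}^{d-1} W_{(d-1,1)}^k(V,\K),$$
where $W_{(d-1,1)}^k(V,\K) = \{F \in W_{(d-1,1)}(V,\K) \,\vert\, \s F = \o^k F\}$.

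The second step is to show that the trivial eigenspace $W_{(d-1,1)}^0(V,\K)$ vanishes. If $F \in W_{(d-1,1)}^0(V,\K)$, then $\s F = F$ and therefore $\s^j F = F$ for every $j \in \{0, 1, \ldots, d-1\}$, so with $\gamma = (1) + \s + \s^2 + \ldots + \s^{d-1}$ one obtains $\gamma F = dF$. By Lemma \ref{lemma:suma}, $\gamma F = 0$, hence $dF = 0$; since $\mathrm{char}\,\K$ is coprime with $d!$, the scalar $d$ is invertible in $\K$ and we conclude $F = 0$. Combining the two steps gives the desired decomposition $W_{(d-1,1)}(V,\K) = \bigoplus_{i=1}^{d-1} W_{(d-1,1)}^i(V,\K)$.

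There is no serious obstacle here: the only subtle point is the diagonalizability of $\s$, which relies on both hypotheses in force in this section (a primitive $d^{\text{th}}$-root of unity in $\K$, and the characteristic assumption that makes $x^d-1$ separable). Lemma \ref{lemma:suma} then does the representation-theoretic work of eliminating the trivial isotypic piece, which is exactly what is expected since $V_{(d-1,1)}$, viewed as a representation of $\langle \s \rangle$, does not contain the trivial character.
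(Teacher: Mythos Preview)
Your proof is correct and takes a genuinely different, more conceptual route than the paper. The paper proceeds by hand: it writes down the explicit projectors $H_{d-k}=\frac{1}{d}\sum_{j=0}^{d-1}(\o^k)^j\s^jF$, uses Lemma \ref{lemma:omega} together with Lemma \ref{lemma:suma} to check that $\sum_k H_{d-k}=F$, verifies directly that $\s H_{d-k}=\o^{d-k}H_{d-k}$, and then establishes directness of the sum by an induction argument on the number of summands. You instead invoke the standard linear-algebra fact that an operator annihilated by a separable polynomial is diagonalizable, which packages the spanning and directness into one step and makes Lemma \ref{lemma:omega} and the induction unnecessary; only Lemma \ref{lemma:suma} is used, to kill the $k=0$ eigenspace. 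The paper's approach has the virtue of exhibiting the eigencomponents of $F$ explicitly (which could be useful elsewhere), while yours is shorter and makes transparent exactly which hypotheses are doing the work.
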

\begin{proof}
    
First, we prove that $F\in W_{(d-1,1)}(V,\K)$ can be decomposed as $F=H_1+\ldots+H_{d-1}$.
So, consider $F\in W_{(d-1,1)}(V,\K)$ and the following sum

$$\frac{F+\omega \s F+\o^2\s^2F+\ldots+\o^{d-1}\s^{d-1}F}{d}+\frac{F+\omega^2 \s F+(\o^2)^2\s^2F+\ldots+(\o^2)^{d-1}\s^{d-1}F}{d}+$$
$$+\frac{F+\omega^3 \s F+(\o^3)^2\s^2F+\ldots+(\o^3)^{d-1}\s^{d-1}F}{d}+\ldots+\frac{F+\omega^{d-1} \s F+(\o^{d-1})^2\s^2F+\ldots+(\o^{d-1})^{d-1}\s^{d-1}F}{d}=$$
%$$=\sum_{k=1}^{d-1}\frac{F+\o^k\s F+(\o^k)^2\s^2 F+\ldots (\o^k)^{d-1}\s^{d-1}F}{d}.$$
\begin{equation}\label{eq}
    =\sum_{k=1}^{d-1}\frac{F+\o^k\s F+(\o^k)^2\s^2 F+\ldots (\o^k)^{d-1}\s^{d-1}F}{d}.
\end{equation}
By grouping in the terms $F, \s F, \s^2 F,\ldots,\s^{d-1}F$ we have
$$\frac{(d-1)F}{d}+\sum_{k=1}^{d-1}\frac{\o^k }{d}\s F+\sum_{k=1}^{d-1}\frac{(\o^k)^2}{d}\s^2 F+\ldots +\sum_{k=1}^{d-1}\frac{(\o^k)^{d-1}}{d}\s^{d-1} F.$$
By Lemma \ref{lemma:omega} we have that $$\sum_{k=1}^{d-1}{\o^k }=-1, \,\sum_{k=1}^{d-1}(\o^k)^2=-1, \,\ldots, \,\sum_{k=1}^{d-1}(\o^k)^{d-1}=-1.$$
So, the above sum is as follows,
$$\frac{(d-1)F-\s F-\s^2 F-\ldots -\s^{d-1} F}{d}=F$$
which is equal to $F$ since $F=-\s F-\s^2 F-\ldots -\s^{d-1} F$ by Lemma \ref{lemma:suma}. For $k=1,\ldots,d-1$, we denote: 
$$H_{d-k}:=\frac{F+\omega^k \s F+(\o^k)^2\s^2F+\ldots+(\o^k)^{d-1}\s^{d-1}F}{d}=\frac{(1)+\omega^k \s +(\o^k)^2\s^2+\ldots+(\o^k)^{d-1}\s^{d-1}}{d}F.$$
So that, the sum in \eqref{eq} leads to $F=H_{d-1}+\ldots+H_1$.
Moreover,   $H_{d-k}\in W_{(d-1,1)}(V,\K)$ for all $k=1,\ldots,d-1$ since $\frac{(1)+\omega^k \s +(\o^k)^2\s^2+\ldots+(\o^k)^{d-1}\s^{d-1}}{d}\in\K[S_d]$. 

\bigskip
Now, we prove that each $H_{d-k}\in W_{(d-1,1)}^{d-k}(V,\K)$ for $k=1,\ldots,d-1$, that is, $\s H_{d-k}=\o^{d-k}H_{d-k}$ for all $k\in\{1,\ldots,d-1\}$. On the one hand,
\bigskip
$$\s H_{d-k}=\frac{\s F+\omega^k \s^2 F+(\o^k)^2\s^3 F+\ldots+(\o^k)^{d-2}\s^{d-1}F+(\o^k)^{d-1}F}{d}=$$ 
$$=\frac{\s F+\omega^k \s^2 F+\o^{2k}\s^3 F+\ldots+\o^{-2k}\s^{d-1}F+\o^{-k}F}{d}.$$
On the other hand,
$$\o^{d-k}H_{d-k}=\frac{\o^{d-k}F+\o^{d-k}\o^k \s F+ \o^{d-k} (\o^k)^2 \s^2 F+ \ldots+\o^{d-k}(\o^k)^{d-2}\s^{d-2}F+\o^{d-k}(\o^k)^{d-1}\s^{d-1}F}{d}=$$
$$=\frac{\o^{-k}F+\s F+ \o^{k} \s^2 F+\ldots+ \o^{-3k}\s^{d-2}F+\o^{-2k}\s^{d-1}F}{d}.$$
One can check that both expressions coincide, just as we wanted.

\bigskip
Finally, we are left to prove that  the sum of $W_{(d-1,1)}^1(V,\K),\ldots, W_{(d-1,1)}^{d-1}(V,\K)$ is direct. 
In order to prove it, we show that $$W_{(d-1,1)}^i(V,\K)\bigcap (\sum_{
    k\neq i, k=1 
}^{ d-1}W_{(d-1,1)}^k(V,\K))=0$$ 
for all $i=1,\ldots, d-1$.
We use induction reasoning on the number $n$ of summands.
For $n=0$ we have trivially that $W_{(d-1,1)}^i(V,\K)\cap (0)=0$. We assume that $W_{(d-1,1)}^i\bigcap (\sum_{
    k=i_1 
}^{ i_{n-1}}W_{(d-1,1)}^k(V,\K))=0$ with $i\notin\{i_1,\ldots,i_{n-1}\}$ and we prove  $W_{(d-1,1)}^i(V,\K)\bigcap (\sum_{
    k=i_1 
}^{ i_{n}}W_{(d-1,1)}^k(V,\K))=0$ with $i\notin\{i_1,\ldots,i_n\}$. Suppose that $G\in W_{(d-1,1)}^i(V,\K)$ and $G\in \sum_{
    k=i_1 
}^{ i_{n}}W_{(d-1,1)}^k(V,\K)$. So, %considering $H_{i}\in W_{(d-1,1)}^{i}(V,\K)$ for $i=1,\ldots,d-1$:
\begin{equation}\label{eq2}
    G=H_{i_1}+\ldots+H_{i_n}
\end{equation}
%$$H_{i}=H_{i_1}+\ldots+H_{i_n}$$ 
with $H_{i_k}\in W_{(d-1,1)}^{i_k}(V,\K)$ for $k=1,\ldots,n$. Therefore,  
$$\s G=\s H_{i_1}+\ldots+\s H_{i_n}$$
and applying the definition of $W_{(d-1,1)}^k(V,\K)$ we obtain
\begin{equation}\label{eq3}
\o^{i}G=\o^{i_1}H_{i_1}+\ldots+\o^{i_n}H_{i_n}.
\end{equation}
%$$\o^{i}H_{i}=\o^{i_1}H_{i_1}+\ldots+\o^{i_n}H_{i_n}.$$
\noindent
We subtract \eqref{eq3} 
from \eqref{eq2}
multiply by $\o^{i_n}$, obtaining as a result the following,
$$(\o^{i_n}-\o^{i})G=(\o^{i_n}-\o^{i_1})H_{i_1}+\ldots+(\o^{i_n}-\o^{i_{n-1}})H_{i_{n-1}}.$$
This implies that $(\o^{i_n}-\o^{i})G\in W_{(d-1,1)}^i(V,\K)\bigcap (\sum_{k=i_1}^{i_{n-1}}W_{(d-1,1)}^k(V,\K))=0$, so that $G=0$ since $\o^{i_n}-\o^{i}\neq 0$.
\end{proof}

\begin{remark}\rm
  Observe that we also have the decomposition $W_{(d-1,1)}(X,\K)=\bigoplus_{i=1}^{d-1}W_{(d-1,1)}^i(X,\K)$ for a general set $X$.  
\end{remark}

\begin{proposition}\label{prop:pregunta}
    Let $\K$ be an algebraically closed field and $F\colon V\times \overset{\text{(d)}}{\cdots} \times V\to \K$ be a multilinear form with $d\geq 3$ and satisfying $F(v,\ldots,v)=0$ for all $v\in V$. There exists $u\in V\setminus\{0\}$ such that $F(u,\ldots,u, V)=0$.
\end{proposition}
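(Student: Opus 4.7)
The plan is to translate the existence of $u$ into the existence of a zero of a section of a vector bundle on $\mathbb{P}(V)$, and then verify non-vanishing by a Chern class computation, as hinted at in the introduction. Consider the polynomial map $\phi\colon V\to V^*$ defined by $\phi(u)(w)=F(u,\ldots,u,w)$; it is homogeneous of degree $d-1$ in $u$. The conclusion to be proved is exactly the existence of $u\in V\setminus\{0\}$ with $\phi(u)=0$, and intrinsically $\phi$ corresponds to a global section of the vector bundle $V^*\otimes\mathcal{O}_{\mathbb{P}(V)}(d-1)$.

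Next I would use the hypothesis $F(v,\ldots,v)=0$, which translates to $\phi(u)(u)=0$ for every $u$. Interpreting this via the (twisted) Euler sequence
\begin{equation*}
0\longrightarrow \Omega^1_{\mathbb{P}(V)}(d)\longrightarrow V^*\otimes\mathcal{O}(d-1)\xrightarrow{\mathrm{ev}}\mathcal{O}(d)\longrightarrow 0,
\end{equation*}
where $\mathrm{ev}$ is the natural pairing $\alpha\otimes s\mapsto \alpha(u)\,s$, the hypothesis says precisely that $\phi$ lies in the kernel of $\mathrm{ev}$, so $\phi\in H^0(\mathbb{P}(V),\Omega^1_{\mathbb{P}(V)}(d))$. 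Writing $n=\dim V$, this is a global section of a bundle of rank $n-1$ on $\mathbb{P}^{n-1}$.

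The main step is then to show that such a section must have a zero on $\mathbb{P}(V)$, which lifts to the desired $u$. If $\phi$ were nowhere vanishing, it would generate a trivial line subbundle $\mathcal{O}\hookrightarrow \Omega^1_{\mathbb{P}(V)}(d)$, and the multiplicativity of the total Chern class in the exact sequence $0\to\mathcal{O}\to \Omega^1(d)\to \Omega^1(d)/\mathcal{O}\to 0$ would force the top Chern class $c_{n-1}(\Omega^1(d))$ to vanish. Using $c(\Omega^1_{\mathbb{P}^{n-1}})=(1-h)^n$ (from the Euler sequence, with $h$ the hyperplane class) together with the standard tensor-with-a-line-bundle formula, a direct computation gives
\begin{equation*}
c_{n-1}(\Omega^1_{\mathbb{P}^{n-1}}(d))=\frac{(d-1)^n-(-1)^n}{d}\,h^{n-1},
\end{equation*}
which is strictly positive whenever $d\geq 3$ and $n\geq 2$, yielding a contradiction. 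The degenerate case $n=1$ is trivial because then $\phi\equiv 0$ already.

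The main obstacle I expect is making sure the Chern class argument is valid in the generality of the paper, namely over any algebraically closed field $\K$ whose characteristic is coprime to $d!$. Both the Euler sequence and intersection theory on $\mathbb{P}^{n-1}$ work in arbitrary characteristic, and the step ``a nowhere vanishing global section forces $c_{\mathrm{top}}$ to vanish'' uses only the Whitney sum formula, so no additional hypothesis should be required; the other subtlety worth spelling out carefully is the identification of the kernel of $\mathrm{ev}$ with $\Omega^1(d)$, which is the only place the Euler sequence enters in an essential way.
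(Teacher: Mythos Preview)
Your proposal is correct and follows essentially the same approach as the paper: interpret the hypothesis via the Euler sequence as a section of $\Omega^1_{\mathbb{P}^{n-1}}(d)$ and conclude by the positivity of its top Chern class. Your version is slightly more explicit, since you actually compute $c_{n-1}(\Omega^1_{\mathbb{P}^{n-1}}(d))=\dfrac{(d-1)^n-(-1)^n}{d}\,h^{n-1}$ and handle the trivial case $n=1$, whereas the paper simply cites this positivity as well known.
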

\begin{proof}
    Let $\{v_1,\ldots,v_n\}$ be a basis of $V$. We need to find $v=\sum_{j=1}^n \l_j v_j$ such that, for $i=1,\ldots,n$,
    $$G_i(\l_1,\ldots,\l_n):=F(\sum_{j=1}^n \l_j v_j,\ldots,\sum_{j=1}^n \l_j v_j,v_i)=0.$$
We are thus looking for non-trivial common solutions of the polynomials $G_1,\dots G_n\in \K[\l_1,\ldots,\l_n]_{d-1}$, and we will interpret this as finding a point in $\p^{n-1}$ in the intersection of the corresponding $n$ hypersurfaces. Observe that there is a relation
$$\sum_{i=1}^n \l_i G_i(\l_1,\ldots,\l_n)=F(\sum_{i=1}^n \l_i v_i,\ldots, \sum_{i=1}^n \l_i v_i)=0.$$ 
This means that the morphism $\OO_{\p^{n-1}}\to \OO_{\p^{n-1}}(d-1)^n$ given by $G_1,\dots,G_n$ is zero when composed with the morphism $\OO_{\p^{n-1}}(d-1)^n\to\OO_{\p^{n-1}}(d)^n$ provided by the Euler exact sequence. %is zero. 
In other words, we have a factorization
    $$\xymatrix{\OO_{\p^{n-1}} \ar[dr] \ar[r] \ar@/_/[ddr]& \Omega_{\p^{n-1}}(d) \ar@{^{(}->}[d] \\ & \OO_{\p^{n-1}}(d-1)^n \ar[d] \\
    & \OO_{\p^{n-1}}(d)}$$
showing that the original morphism $\OO_{\p^{n-1}}\to \OO_{\p^{n-1}}(d-1)^n$ is injective if and only if $\OO_{\p^{n-1}}\to \Omega_{\p^{n-1}}(d)$ is injective. But this cannot be injective because, as it is well known (see also Remark \ref{remark:distribucion}), $c_{n-1}(\Omega_{\p^{n-1}}(d))>0$ when $d\ge3$. This completes the proof.
\end{proof}

\begin{theorem}\label{teo:main}
    Let $V$ be a vector space over a field $\K$ and fix $d\geq 3$. Suppose that the characteristic of $\K$ does not divide $d!$ and that $\K$ contains a  primitive $d^{\text{th}}$-root of unity, $\o$.
    If $F\in \left(\bigoplus_{\l\neq (d), (d-1,1)} W_\l(V,\K)\right)\oplus W_{(d-1,1)}^i(V,\K)$ for $i=1,\ldots,d-1$, then $\Det(F)=0$. 
\end{theorem}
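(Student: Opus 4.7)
The plan is to write $F = F' + G$ with $F' \in \bigoplus_{\l\neq (d),(d-1,1)} W_\l(V,\K)$ and $G \in W_{(d-1,1)}^i(V,\K)$, and to exhibit a nonzero vector $u\in V$ at which the full condition \eqref{estrella} is satisfied for $F$; that immediately forces $\Det(F)=0$ by the key property of the hyperdeterminant recalled in the preliminaries. Since the vanishing of $\Det$ is a polynomial condition on $F$, we may extend scalars to an algebraic closure and thus assume $\K$ algebraically closed, as required by Proposition \ref{prop:pregunta}.

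The first step is to produce the vector $u$. Since the whole subspace in which $F$ lives is contained in $\bigoplus_{\l \neq (d)} W_\l(V,\K)$, the preliminary characterisation of the complement of the symmetric case gives $F(v,\ldots,v)=0$ for every $v\in V$. Proposition \ref{prop:pregunta} then produces $u\in V\setminus\{0\}$ with $F(u,\ldots,u,V)=0$. This is one of the $d$ equations appearing in \eqref{estrella}, and the remaining task is to transport the vanishing to all $d$ slots.

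I would then treat the two summands separately. For $F'$, Remark \ref{remark:any} already provides the full condition \eqref{estrella} at \emph{every} vector, so in particular at $u$. For $G$, I first note that $G(u,\ldots,u,V) = F(u,\ldots,u,V)-F'(u,\ldots,u,V)=0$, and then exploit the defining eigenvalue relation $\s G = \o^i G$. Choosing $\s=(1\,2\,\ldots\,d)$, this reads $G(x_2,\ldots,x_d,x_1)=\o^i G(x_1,\ldots,x_d)$, and substituting $(x_1,\ldots,x_d)=(u,\ldots,u,v)$ moves the special entry $v$ from slot $d$ to slot $d-1$ while multiplying the value by $\o^i$; iterating shifts $v$ successively through slots $d-1,d-2,\ldots,1$, so $G(u,\ldots,u,V,u,\ldots,u)=0$ with $V$ in each position. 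Combining with the vanishings for $F'$ yields the full \eqref{estrella} for $F=F'+G$ at $u$, and hence $\Det(F)=0$.

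The only real subtlety I expect is checking that Proposition \ref{prop:pregunta} can be applied to $F$ itself, not merely to the two summands separately; this is what lets us start the process from a single vector $u$ common to $F'$ and $G$. Once this is in place, the propagation argument for $G$ is a routine cyclic-shift computation, and the invocation of Remark \ref{remark:any} for $F'$ requires no further work.
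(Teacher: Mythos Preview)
Your proof is correct and follows essentially the same strategy as the paper's: pass to the algebraic closure, split $F=F'+G$, invoke Remark \ref{remark:any} for $F'$, and use Proposition \ref{prop:pregunta} together with the eigenvalue relation $\sigma G=\omega^i G$ to obtain condition \eqref{estrella} at a common nonzero vector $u$. The only cosmetic difference is that the paper applies Proposition \ref{prop:pregunta} directly to $G$ (which already satisfies $G(v,\ldots,v)=0$) rather than to $F$, thereby bypassing the subtraction step you use to deduce $G(u,\ldots,u,V)=0$.
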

\begin{proof}
Let us denote by $\Bar{\K}$ the algebraic closure of $K$, by $\Bar{V}$ the completion of $V$, that is, $\Bar{V}=V\otimes \Bar{K}$, and by $\Bar{F}$ the extension of $F\in \left(\bigoplus_{\l\neq (d), (d-1,1)} W_\l(V,\K)\right)\oplus W_{(d-1,1)}^i(V,\K)$. If we see $F$ and $\Bar{F}$ as hypermatrices, they are completely the same. So, if we prove that $\Det(\Bar{F})=0$ we are done.
By hypothesis, $\Bar{F}\in \left(\bigoplus_{\l\neq (d), (d-1,1)} W_\l(\Bar{V},\Bar{\K})\right)\oplus W_{(d-1,1)}^i(\Bar{V},\Bar{\K})$ and we can decompose it as $\Bar{F}=F'+F''$ with $F'\in \bigoplus_{\l\neq (d), (d-1,1)} W_\l(\Bar{V},\Bar{\K})$ and $F''\in W_{(d-1,1)}^i(\Bar{V},\Bar{\K})$.
On the one hand, since $F''(v,\ldots,v)=0$ for all $v\in \Bar{V}$, by Proposition \ref{prop:pregunta}, there exists $u\in \Bar{V}\setminus\{0\}$ such that $F''(u,\ldots,u,\Bar{V})=0$. Moreover, since $F''\in W_{(d-1,1)}^i(\Bar{V},\K)$, by Proposition \ref{prop:descomp},  we know that $\s F''=\o^{d-i} F''$ where $\s$ is a $d$-cycle and $\o$. So, $F''$ satisfies condition \eqref{estrella} for $u$. 
On the other hand, by Remark \ref{remark:any},  $F'$ satisfies condition \eqref{estrella} for all $v\in \Bar{V}$. In particular, $F'$ satisfies condition \eqref{estrella} for $u\in \Bar{V}\setminus\{0\}$.
Therefore, $\Bar{F}$ satisfies condition \eqref{estrella} for such $u$. So, $\Det(\Bar{F})=0$.
\end{proof}

\begin{remark}\rm
Let us recall that for the symmetric case, $F\in W_{(d)}(V,\K)$, L. Oeding described in \cite{OEDING} all the irreducible factors of $\Det(F)$. 
Note now that if $F\in W_{(d-1,1)}(V,\K)$ then $\Det(F)$ is also not necessarily zero. For example, if $n=2$ and $d=3$ we have the explicit formula of the hyperdeterminant (see \cite[Proposition 1.7, Chapter 14]{Kapranov} for the original outcome and \cite[Example 5.6]{OttHyp} for the reformulation we use). So, consider the multilinear map $F\colon \K^2\times \K^2\times \K^2\to \K$ given by $F((x_1,y_1),(x_2,y_2),(x_3,y_3))=x_1 x_2 y_3-x_1 y_2 x_3+y_1 y_2 x_3-x_1 y_2 y_3$ whose associated multidimensional matrix is $A=(a_{ijk})$ with $i,j,k=1,2$ and $a_{111}=0,a_{222}=0,a_{112}=1,a_{211}=0,a_{121}=-1,a_{221}=1,a_{122}=-1,a_{212}=0$. Observe that $F\in W_{(2,1)}(\K^2,\K)$ since $F+(123)F+(132)F=0$, that is,
$F((x_1,y_1),(x_2,y_2),(x_3,y_3))+F((x_2,y_2),(x_3,y_3),(x_1,y_1))+F((x_3,y_3),(x_1,y_1),(x_2,y_2))=0$.
Its hyperdeterminant is:
$$\Det(F)= \left(\begin{vmatrix}
a_{111} & a_{122}\\
a_{211} & a_{222}
\end{vmatrix}+ \begin{vmatrix}
    a_{121} & a_{112}\\
    a_{221} & a_{212}
\end{vmatrix}\right)^2-4\begin{vmatrix}
    a_{111} & a_{112}\\
    a_{211} & a_{212}
\end{vmatrix}\cdot \begin{vmatrix}
    a_{121} & a_{122}\\
    a_{221} & a_{222}
\end{vmatrix}=$$

$$=\left(\begin{vmatrix}
0 & -1\\
0 & 0
\end{vmatrix}+ \begin{vmatrix}
    -1 & 1\\
    1 & 0
\end{vmatrix}\right)^2-4\begin{vmatrix}
    0 & 1\\
    0 & 0
\end{vmatrix}\cdot \begin{vmatrix}
    -1 & -1\\
    1 & 0
\end{vmatrix}=1\neq 0.$$

\end{remark}

\begin{remark}\label{remark:distribucion}\rm
If we allow $d=2$ in the proof of Proposition \ref{prop:pregunta}, we have that $c_{n-1}(\Omega_{\p^{n-1}}(2))=1$ if $n$ is odd even and $c_{n-1}(\Omega_{\p^{n-1}}(2))=0$ if $n$ is even, so that in this last case we can have an injective morphism. This corresponds to the fact that, for $n\times n$ matrices, the determinant is always zero if $n$ is odd but can be nonzero if $n$ is even. It is worth to notice that we could have rephrased everything in the language of distributions. Indeed giving polynomials $G_1,\dots,G_n$ as in our proof is equivalent to give a distribution of degree $d-2$ in $\p^{n-1}$, i.e a section of  $\Omega_{\p^{n-1}}(d)$. And a well-known result in this language (see for example \cite[Proposition 2.1] {distribuciones}) states that a distribution of degree $d-2$ in $\p^{n-1}$ has singular locus except possibly when $d=2$ and $n$ even. It is worth to mention that 
$$H^0(\Omega_{\p^{n-1}}(d))=\Sc_{(d-1,1)} H^0(\OO_{\p^{n-1}}(1)),$$ 
and this is isomorphic to any of the spaces $W_{(d-1,1)}^i(X,\K)$. In fact, and more generally, from Bott's formula we have an isomorphism $$H^0(\Omega^p_{\p^{n-1}}(d))=\Sc_{(d-p,1,\dots,1)} H^0(\OO_{\p^{n-1}}(1)).$$
This can be obtained directly by induction on $p$ (the case $p=0$ being trivial) and using the exact sequence
$$0\to H^0(\Omega^p_{\p^{n-1}}(d))\to \bigwedge^{p+1}H^0(\OO_{\p^{n-1}}(1))\otimes H^0(\OO_{\p^{n-1}}(d-p-1))\to H^0(\Omega^{p-1}_{\p^{n-1}}(d))\to0$$
(taking cohomology in the short exact sequences of the long Euler exact sequence). 

\end{remark}

\begin{remark}\label{remark:subspaces}\rm

From the above remark we easily get the known result that $H^0(\Omega_{\p^{n-1}}(d))$ has dimension equal to $(d-1)\binom{n+d-2}{d}$ so that this is also the dimension of any $\dim W^i_{(d-1,1)}(V,\K)$. In particular, our Theorem \ref{teo:main} is saying that, inside the $n^d$-dimensional space $V^*\otimes\overset{\text(d)}{\ldots}\otimes V^*$, the hypersurface determined by $\Det$ contains $d-1$ linear subspaces of codimension $\binom{n+d-1}{d}+(d-2)(d-1)\binom{n+d-2}{d}$. They meet in the common subspace of codimension $\binom{n+d-1}{d}+(d-1)^2\binom{n+d-2}{d}$ found in \cite{TocinoHyper} and \cite{repre}.

\end{remark}

\bibliographystyle{plain}
\bibliography{ref}
\end{document}